\documentclass[11pt]{amsart}

\usepackage{amssymb}
\usepackage{todonotes}
\usepackage{tikz-cd}
\usepackage{hyperref}

\newtheorem{theorem}{Theorem}[section]
\newtheorem*{theorem*}{Theorem}
\newtheorem{lemma}[theorem]{Lemma}
\newtheorem*{lemma*}{Lemma}

\newtheorem*{proposition*}{Proposition}
\newtheorem{corollary}[theorem]{Corollary}
\newtheorem*{corollary*}{Corollary}

\theoremstyle{definition}
\newtheorem{definition}[theorem]{Definition}
\newtheorem*{definition*}{Definition}

\newtheorem*{example*}{Example}

\newtheorem*{ques*}{Question}

\newtheorem*{claim*}{Claim}
\newtheorem{remark}[theorem]{Remark}
\newtheorem*{remark*}{Remark}

\usepackage{bm}
\newcommand{\ii}{\item}
	\newcommand{\wt}{\widetilde}
\newcommand{\inv}{^{-1}}

\newcommand{\defeq}{:=}
\newcommand{\half}{\frac12}

\newcommand{\CC}{\mathbb C}

\newcommand{\QQ}{\mathbb Q}
\newcommand{\UU}{\mathbb U}
\newcommand{\RR}{\mathbb R}
\newcommand{\ZZ}{\mathbb Z}

\newcommand{\vv}[1]{\left\lVert #1 \right\rVert_{\nu}}

\DeclareMathOperator{\PGL}{PGL}
\newcommand{\Qc}{{\mathbb Q}^{\mathrm{c}}}
\newcommand{\kc}{k^{\mathrm{c}}}
\newcommand{\eps}{\varepsilon}
\newcommand{\BG}{{\mathbb G}_{\mathrm{m}}}
\newcommand{\vocab}{\textit}
\newcommand{\ol}{\overline}
\usepackage{mathrsfs}
\newcommand{\LL}{\mathscr{L}}

\usepackage{tikz}
\usetikzlibrary{shapes,arrows,arrows.meta}


\makeatletter
\newcommand*{\house}[1]{%
  \mathord{%
    \mathpalette\@house{#1}%
  }%
}
\newcommand*{\@house}[2]{%
  \dimen@=\fontdimen8 %
      \ifx#1\scriptscriptstyle\scriptscriptfont
      \else\ifx#1\scriptstyle\scriptfont
      \else\textfont\fi\fi
      3 %
  \sbox0{%
    $#1%
      \vrule width\dimen@\relax
      \overline{%
        \kern2\dimen@
        \begingroup 
          #2%
        \endgroup
        \kern2\dimen@
      }%
      \vrule width\dimen@\relax
      \mathsurround=1.5\dimen@ 
    $%
  }%
  \ht0=\dimexpr\ht0-\dimen@\relax
  \dp0=\dimexpr\dp0+2\dimen@\relax
  \vbox{%
    \kern\dimen@ 
    \copy0 %
  }%
}


\title[Avoiding integers in orbits over cyclotomic closures]%
{Avoiding algebraic integers of bounded house
in orbits of rational functions over cyclotomic closures}
\date{May 18, 2018}

\usepackage{amsaddr}

\author{Evan Chen}
\address{Department of Mathematics, Massachusetts Institute of Technology}
\email{evanchen@mit.edu}

\subjclass[2010]{11R18, 37F10}
\keywords{cyclotomic closure, orbits, rational function}

\begin{document}
\maketitle

\begin{abstract}
	Let $k$ be a number field with cyclotomic closure $k^{\mathrm{c}}$,
	and let $h \in k^{\mathrm{c}}(x)$.
	For $A \ge 1$ a real number, we show that
	\[ \{ \alpha \in k^{\mathrm{c}} :
		h(\alpha) \in \overline{\mathbb Z}
		\text{ has house at most } A \} \]
	is finite for many $h$.
	We also show that for many such $h$
	the same result holds if $h(\alpha)$ is replaced
	by orbits $h(h(\cdots h(\alpha)))$.
	This generalizes a result proved by Ostafe
	that concerns avoiding roots of unity, which is the case $A=1$.
\end{abstract}

\section{Introduction}
\subsection{Rational functions and set avoidance}
We begin with the following general definition.
\begin{definition}
	Let $F$ be a subfield of $\CC$, and $P$ a subset of $\CC$.
	Let $h \in F(x)$ be a rational function,
	and let $h^n$ denote the function composition of
	$h$ applied $n$ times ($n=0,1,2,\dots$).
	\begin{itemize}
		\ii We say that $h$ is \vocab{$P$-avoiding}
		(over $F$) if
		\[ \#\left\{ \alpha \in F \mid h(\alpha) \in P \right\} < \infty. \]
		\ii We say that $h$ is \vocab{strongly $P$-avoiding} (over $F$) if
		\[ \#\left\{ \alpha \in F
			\mid h^n(\alpha) \in P \text{ for some $n \ge 1$} \right\}
			< \infty. \]
	\end{itemize}
\end{definition}

Let $\UU \subseteq \CC$ denote the set of roots of unity
and let $k$ be a number field.
We will denote its cyclotomic closure $k(\UU)$ by $\kc$.
This paper will concern avoidance over $\kc$.

We say a rational function $h(x) \in \kc(x)$
is \vocab{special} if $h$ is conjugate,
with respect to a M\"{o}bius transformation (i.e.\ via $\PGL_2(\kc)$),
to either $\pm x^d$ or the Chebyshev polynomial $T_d(x)$
which is uniquely determined by the equation
$T_d(\half(t + t\inv)) = \half(t^d + t^{-d})$.

The question of $\UU$-avoidance and strong $\UU$-avoidance
has been examined by Dvornicich and Zannier.
For example, as a consequence of \cite[Corollary 1]{ref:zannier},
we have the following result.
\begin{theorem*}
	[From {\cite[Corollary 1]{ref:zannier}}]
	Let $h = p/q \in \kc(x)$, where $p,q \in \kc[x]$.
	Assume that $p(x) - y^m q(x)$ is irreducible over $\kc$
	for all positive integers $m \le \max(\deg p, \deg q)$.
	Then $h$ is $\UU$-avoiding over $\kc$.
\end{theorem*}
Ostafe \cite{ref:ostafe} proved
the following result for strong $\UU$-avoidance.
\begin{theorem*}
	[{\cite[Theorem 1.2]{ref:ostafe}}]
	Let $h = p/q \in k(x)$, where $p,q \in k[x]$.
	Assume $h$ is $\UU$-avoiding over $\kc$,
	and $\deg p > \deg q + 1$.
	Assume also that $\max(\deg p, \deg q) \ge 2$
	and $p(x) - y^m q(x)$ as a polynomial in $x$
	does not have a root in $\kc(y)$
	for all positive integers $m \le \deg(p)$.
	Then $h$ is strongly $\UU$-avoiding unless $h$ is special.
\end{theorem*}

In this paper we investigate a generalization of these results
proposed by Ostafe (see \cite[\S4]{ref:ostafe}).
In order to state it, we need to define the following.
\begin{definition}
	The \vocab{house} of an algebraic number $\alpha$,
	denoted $\house{\alpha}$, is the maximum value of
	$\left\lvert \beta \right\rvert$ across the
	$\QQ$-Galois conjugates $\beta$ of $\alpha$.

	For $A \ge 1$ a real number,
	let $P_A$ denote the set of algebraic integers $\alpha$
	which have house at most $A$.
\end{definition}
For example every algebraic integer has house at least $1$,
and by Kronecker's theorem (the main result of \cite{ref:kro1},
see also \cite{ref:kro2}) we have $P_1 = \UU$.

We answer the following question.
\begin{ques*}
	For $A \ge 1$ and $h \in \kc(x)$,
	under what conditions can one show that $h$
	is (strongly) $P_A$-avoiding?
\end{ques*}

\subsection{Summary of results}
The \vocab{degree} of a nonconstant rational function
$h$ with coefficients in some field $F$ is defined to be $[F(x) : F(h(x))]$.
Consequently, note that $\deg(h_1 \circ h_2) = \deg h_1 \deg h_2$.
If $h$ is written as a quotient of relatively prime polynomials $p/q$,
then $\deg h = \max(\deg p, \deg q)$.

Our results on $P_A$-avoidance can be summarized as follows.
\begin{theorem}
	\label{thm:salespitch}
	Let $k$ be a number field, $A \ge 1$ and $\eps > 0$.
	Let $h \in \kc(x)$ be a rational function.
	\begin{itemize}
		\ii Then $h$ is $P_A$-avoiding
		unless there exists $S \in \kc(x)$
		such that $h(S(x))$ equals a Laurent polynomial
		with $d$ terms, where \[ d \ll_{k,\eps} A^{2+\eps}. \]
		\ii If $\deg h \gg_{k,A} 1$,
		then we can also assume $\deg S \le 2$.
	\end{itemize}
\end{theorem}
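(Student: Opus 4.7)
The strategy is to reduce the $P_A$-avoidance problem to the $\UU$-avoidance case (handled by the Dvornicich--Zannier theorem cited above) via a structural description of algebraic integers in $\kc$ of bounded house. The key input I would use is a theorem of Loxton, which asserts that every algebraic integer in $\Qc$ of house $\le A$ is a sum of $\ll_\eps A^{2+\eps}$ roots of unity. An appropriate extension to $\kc$ should give: every $\beta \in P_A \cap \kc$ admits a representation
\[ \beta = c_1 \zeta_1 + \cdots + c_N \zeta_N, \qquad N \ll_{k,\eps} A^{2+\eps}, \]
where $\zeta_i \in \UU$ and the $c_i$ lie in a finite set $C = C(k,A) \subset \mathcal{O}_k$ depending only on $k$ and $A$. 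This is what accounts for the exponent appearing in the theorem.

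Next, assume $h \in \kc(x)$ is not $P_A$-avoiding, so $h(\alpha) \in P_A$ for infinitely many $\alpha \in \kc$. Apply the above representation to each such value and pigeonhole on the finite set $C^N$ of coefficient tuples. This gives infinitely many $\alpha \in \kc$ and fixed $c_1, \dots, c_N \in \mathcal{O}_k$ with
\[ h(\alpha) = c_1 \zeta_1^{(\alpha)} + \cdots + c_N \zeta_N^{(\alpha)}, \qquad \zeta_i^{(\alpha)} \in \UU. \]
These produce infinitely many $\kc$-points on the variety $V \subset \mathbb{A}^{N+1}$ cut out by $h(x) = \sum_i c_i y_i$, with each $y_i$ additionally restricted to $\UU$.

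The crux is to upgrade these points to an algebraic parameterization: one must exhibit $S \in \kc(x)$ and integers $m_1, \dots, m_N$ with
\[ h(S(t)) = c_1 t^{m_1} + \cdots + c_N t^{m_N}, \]
a Laurent polynomial with at most $N$ terms. This extends the cited Dvornicich--Zannier theorem (the $N=1$ case, $h \circ S = x^m$) from a single torus factor to $N$ of them. My plan here is an irreducible-component argument: some component of $V$ contains infinitely many $\kc$-points, and projecting coordinate by coordinate one applies the $N=1$ case, combined with a Hilbert-irreducibility-style specialization, to force each projection to be a monomial map, which then assembles into the desired parameterization.

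For the second bullet, the identity $h(S(t)) = \sum c_i t^{m_i}$ becomes increasingly rigid as $\deg h$ grows relative to $N$: the number of terms is capped by $N = N(k,A)$ while $\deg(h \circ S) = \deg h \cdot \deg S$ may be large, so the possibilities for $S$ are squeezed, and standard Ritt-style decomposition theory forces $\deg S \le 2$ (the degree-$2$ case corresponding to the Chebyshev substitution $t + t\inv$ that appears in the definition of ``special''). The principal obstacle is the curve-theoretic step yielding the monomial parameterization, as it requires a generalization of Dvornicich--Zannier uniform across the $N$ torus factors; once that is in place, the pigeonhole reduction and the Ritt-style cleanup are routine.
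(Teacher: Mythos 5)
Your outline correctly reproduces the first and last steps of the paper's argument: Loxton's theorem plus pigeonhole to fix coefficients, the construction of a variety $V$ cut out by $h(y) = \sum_i c_i x_i$ with infinitely many points whose first $d$ coordinates are roots of unity, and a Ritt-style rigidity argument for the second bullet (the paper uses the 2012 Fuchs--Zannier theorem, Theorem~\ref{thm:terms}, which bounds $\deg h$ in terms of the number of terms of $h\circ q$ unless $q$ has the special form $\lambda(ax^n+bx^{-n})$ — precisely what licenses the reduction to $\deg S \le 2$). One small inaccuracy along the way: in Loxton's theorem the finite coefficient set $E$ depends only on $k$, not on $A$; the dependence on $A$ is entirely in the number of summands $d \le L(AB)$.

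The genuine gap is exactly the step you flag as ``the crux.'' Your plan — project the variety coordinate-by-coordinate, apply the $N=1$ Dvornicich--Zannier result to each projection, and glue via Hilbert irreducibility — does not work as stated. The coordinates $\zeta_1^{(\alpha)},\dots,\zeta_d^{(\alpha)}$ are constrained jointly by $V$, not independently, so parameterizing each projection separately as $x \mapsto x^{m_i}$ gives no simultaneous parameterization; and $\kc$ is an infinite extension, so a Hilbert-irreducibility specialization is not available in the usual form. The paper instead stays in the $d$-dimensional torus: it iterates the Torsion Points Theorem (Theorem~\ref{thm:torsionpts}) to extract an irreducible $W\subseteq V$ and a torsion coset $\bm\beta T$ with $\Pi(W)\subseteq\bm\beta T$ and with the torsion points Zariski-dense in $\bm\beta T$, and then invokes Zannier's structure theorem for finite maps to tori with Zariski-dense torsion in the image (Theorem~\ref{thm:zmain}), which supplies an isogeny $\mu$ and a birational $\rho:\BG^r\dashrightarrow W$ making the diagram commute. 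It is this isogeny structure that forces the first $d$ coordinates of $\rho$ to be monomials $\beta_i\bm x^{\bm v_i}$ with linearly independent exponent vectors, and a single further specialization $\bm x = (x^{c_1},\dots,x^{c_r})$ yields the one-variable $S$ and the Laurent polynomial witness. Without Theorem~\ref{thm:zmain} (or an equivalent multi-dimensional torsion-structure theorem), the passage from ``infinitely many torsion-valued points on $V$'' to ``monomial parameterization'' is not justified, so the argument as proposed is incomplete at its central step.
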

This theorem has an effective and more explicit form
given as Theorem~\ref{thm:big} and Theorem~\ref{thm:most}.

A corollary of Theorem~\ref{thm:salespitch} is the following.
\begin{corollary}
	\label{cor:amusing}
	Let $k$ be a number field and $A \ge 1$.
	If $h$ has more than two poles, then $h$ is $P_A$-avoiding.
\end{corollary}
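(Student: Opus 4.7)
The strategy is simply to invoke Theorem~\ref{thm:salespitch} and rule out its exceptional case. The claim will follow once I show that if $h$ has more than two poles, then for no nonconstant $S \in \kc(x)$ can $h(S(x))$ be a Laurent polynomial, regardless of the number of terms.

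The key observation is geometric and elementary: a Laurent polynomial $\sum_{i=-m}^{n} a_i x^i$, viewed as a morphism $\mathbb{P}^1(\CC) \to \mathbb{P}^1(\CC)$, has pole set contained in $\{0,\infty\}$, so it has at most two poles. Hence I only need to check that the hypothesis ``more than two poles'' is preserved (or enlarged) under precomposition with a nonconstant rational function $S$.

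This in turn is a statement about preimages. The poles of $h \circ S$ are exactly $S^{-1}(h^{-1}(\infty))$; since $S\colon \mathbb{P}^1(\CC) \to \mathbb{P}^1(\CC)$ is surjective with nonempty fibers, and the fibers over distinct points are disjoint, one gets
\[
	\#\text{poles of }(h\circ S) \;=\; \bigl|S^{-1}(h^{-1}(\infty))\bigr| \;\ge\; |h^{-1}(\infty)| \;=\; \#\text{poles of }h \;>\; 2.
\]
Therefore $h\circ S$ cannot equal a Laurent polynomial, the exceptional clause of Theorem~\ref{thm:salespitch} does not apply, and $h$ is $P_A$-avoiding.

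I do not anticipate any real obstacle; the whole corollary reduces to this short pole-counting argument once the theorem is in hand. The only mild subtlety is that one must read Theorem~\ref{thm:salespitch} with the implicit understanding that $S$ is nonconstant (otherwise $h\circ S$ would collapse to a constant, trivially a one-term Laurent polynomial and hence a vacuous exception).
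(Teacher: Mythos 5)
Your proof is correct and follows essentially the same pole-counting argument as the paper: both use that a Laurent polynomial has poles only at $0$ and $\infty$, while precomposition by a nonconstant $S$ (being surjective on $\mathbb{P}^1$) cannot reduce the number of poles below that of $h$. The only cosmetic difference is that you count poles projectively on $\mathbb{P}^1$ including $\infty$, whereas the paper phrases it affinely (the Laurent polynomial's only finite pole is $0$, and $S$ hits some pole of $h$ away from $S(0)$), but the underlying idea is identical.
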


Using this result, we will deduce the following
generalization of a result of Ostafe \cite[Theorem 1.2]{ref:ostafe},
and give a simple proof using Theorem~\ref{thm:big}.
\begin{theorem}
	\label{thm:strongavoid}
	Let $h = p/q \in k(x)$ , where $p,q \in k[x]$.
	Let $A \ge 1$.
	Assume $h$ is $P_A$-avoiding over $\kc$,
	and $\deg p > \deg q + 1$.
	Then $h$ is strongly $P_A$-avoiding unless $h$ is special.
\end{theorem}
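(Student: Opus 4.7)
My approach follows Ostafe's strategy for the $\UU$-avoidance case, with Theorem~\ref{thm:big} substituted for the Dvornicich--Zannier theorem as the structural input. The first step is to reduce to producing an infinite ``backward $\kc$-orbit'' for $h$. Set $S_n = \{\alpha \in \kc : h^n(\alpha) \in P_A\}$. The $P_A$-avoiding hypothesis is $|S_1| < \infty$, and by induction (using $S_n \subseteq h^{-1}(S_{n-1})$) each $S_n$ is finite. Strong $P_A$-avoidance is equivalent to $T := \bigcup_{n \ge 1} S_n$ being finite, so suppose for contradiction $T$ is infinite. Define the first-hitting time $m(\alpha) = \min\{n \ge 1 : h^n(\alpha) \in P_A\}$ and the level sets $V_n = \{\alpha \in T : m(\alpha) = n\} \subseteq S_n$, so that $T = \bigsqcup_n V_n$. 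Then $h$ maps $V_n \to V_{n-1}$ for $n \ge 2$, and maps $V_1$ into the finite set $W := h(S_1) \subseteq P_A \cap \kc$. This organises $T$ as a forest with root set $W$ and branching at most $\deg h$. Since $T$ is infinite while each $V_n$ is finite, K\"onig's lemma yields an infinite chain $\beta_0, \beta_1, \beta_2, \ldots$ of distinct elements of $\kc$ with $\beta_0 \in W$ and $h(\beta_{n+1}) = \beta_n$ for all $n \ge 0$.

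The next step is to apply Theorem~\ref{thm:big} to the iterate $h^n$ for $n$ large. The hypothesis $\deg p > \deg q + 1$ yields $\deg h \ge 2$, so $\deg h^n \to \infty$ and the second bullet of Theorem~\ref{thm:salespitch} applies to $h^n$ for $n$ large. Its dichotomy reads: either $h^n$ is $P_A$-avoiding (which is automatic), or there exists $S \in \kc(x)$ with $\deg S \le 2$ such that $h^n(S(x))$ is a Laurent polynomial with $\ll_{k,\eps} A^{2+\eps}$ terms. The plan is to exploit the effective content of Theorem~\ref{thm:big}: the backward chain $\beta_0, \beta_1, \ldots, \beta_n$, together with the Galois conjugates of each $\beta_i$ over $k$, should furnish enough elements of $S_n$ to exceed the effective bound supplied by Theorem~\ref{thm:big}, and hence force the Laurent-polynomial alternative. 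Once we have $h^n \circ S$ equal to a Laurent polynomial with few terms for some $n \ge 1$ and some $S$ of degree $\le 2$, a Ritt-style decomposition argument pins $h$ itself down, up to conjugation over $\kc$, as $\pm x^d$ or the Chebyshev polynomial $T_d$, i.e.\ $h$ is special, contradicting the standing hypothesis.

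The main obstacle is the middle step. The na\"ive qualitative dichotomy of Theorem~\ref{thm:salespitch} is too weak by itself, since its ``not $P_A$-avoiding'' branch never triggers for $h^n$. Extracting the Laurent-polynomial alternative therefore requires the quantitative form of the avoidance theorem, and one has to arrange that the length of the backward orbit, multiplied by the $\mathrm{Gal}(\kc/k)$-orbit sizes, exceeds whatever threshold Theorem~\ref{thm:big} specifies. The final Ritt-style step, from ``$h^n$ admits a Laurent factorisation'' to ``$h$ is special'', should be comparatively routine.
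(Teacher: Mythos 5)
Your reduction to an infinite backward orbit (via $S_n$, the level sets $V_n$, and K\"onig's lemma) is sound and is a reasonable way to organize the contradiction hypothesis; the paper does the analogous reduction more tersely by fixing a single target $\gamma \in P_A$ (of which there are finitely many by the $P_A$-avoidance hypothesis) and assuming infinitely many $\alpha$ with $h^n(\alpha)=\gamma$ for some $n\ge 1$. But the middle step, which you yourself flag as ``the main obstacle,'' is a genuine gap, and the fix you float will not work. Theorem~\ref{thm:big} is a dichotomy between \emph{finiteness} and the existence of a witness; it carries no counting threshold. There is no number $M$ such that ``$|S_n|>M$ implies a witness exists,'' so producing a long backward chain and multiplying by Galois orbit sizes cannot trigger the Laurent-polynomial alternative. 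Worse, your own induction shows every $S_n$ is finite, i.e.\ every iterate $h^n$ \emph{is} $P_A$-avoiding, so the ``not $P_A$-avoiding'' branch is provably unreachable for $h^n$ itself.

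The missing idea is Lemma~\ref{lem:ostafe}, which is where the hypothesis $\deg p > \deg q + 1$ actually does its work (in your write-up that hypothesis is only used to get $\deg h \ge 2$, which is a signal that the argument is off track). That lemma produces constants $T>0$ and $D\in\ZZ$, depending only on $h$, such that if $h^n(\alpha)\in P_A$ then for every $j<n$ the value $Dh^j(\alpha)$ is an algebraic integer of house at most $D\max(T,A)$. Thus one does \emph{not} try to show $h^N$ fails to be $P_A$-avoiding; rather one shows the rescaled iterate $D\cdot h^N$ fails to be $P_C$-avoiding for the \emph{enlarged} bound $C:=D\max(T,A)$. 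From the infinitude of solutions to $h^n(\alpha)=\gamma$ with $n>N$ (or, in your setup, from the tail $\beta_{N+1},\beta_{N+2},\dots$ of your backward chain together with the lemma), one gets infinitely many $\alpha\in\kc$ with $D h^N(\alpha)\in P_C$. Now Theorem~\ref{thm:big} applies to $D h^N$ and hands you a $C$-short witness: $D h^N(S(x))$ is a Laurent polynomial with at most $L(BC)$ terms, a bound \emph{independent of $N$}. The final step is then not a Ritt-style classification but a contrapositive via the Fuchs--Zannier term-count bound (Corollary~\ref{thm:specialterms}): if $h$ is not special and $\deg h\ge 3$, then $h^N\circ S$ has at least $\log_5(\deg h^{\,N-2}/2016)$ terms, which exceeds $L(BC)$ once $N$ is large; the case $\deg h = 2$ is handled by replacing $h$ with $h\circ h$. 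So the structure of your plan (backward orbit $\to$ witness for an iterate $\to$ specialness) is the right shape, but both the mechanism that produces the witness and the mechanism that closes the argument differ from what you propose, and without Lemma~\ref{lem:ostafe} there is no way to make the middle step go through.
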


\subsection{Outline}
The rest of the paper is structured as follows.
In Section~\ref{sec:loxton}, we state the Loxton theorem,
namely Theorem~\ref{thm:loxton},
and use this to give a more precise version of
Theorem~\ref{thm:salespitch} as
Theorem~\ref{thm:big} and Theorem~\ref{thm:most}.
In Section~\ref{sec:nukes}, we introduce several
auxiliary results which will be used in our proofs.

In Section~\ref{sec:usenuke} we prove Theorem~\ref{thm:big}
and Theorem~\ref{thm:most}, as well as Corollary~\ref{cor:amusing};
these are our results on $P_A$-avoidance.
Finally, Section~\label{sec:orbit} gives the proof of
Theorem~\ref{thm:strongavoid},
which is our result on strong $P_A$-avoidance.

\subsection*{Acknowledgments}
This research was funded by NSF grant 1358659
and NSA grant H98230-16-1-0026
as part of the 2016 Duluth Research Experience for Undergraduates (REU).

The author thanks Joe Gallian for supervising the research,
and for suggesting the problem.

The author also thanks Joe Gallian, Aaron Landesman, and David Yang
for their close readings of early drafts of the paper,
as well as Benjamin Gunby and Levent Alpoge for helpful discussions.
Finally the author is deeply grateful to the anonymous referees
for extensive comments and corrections.

\section{Full statement of results on $P_A$-avoidance}
\label{sec:loxton}
In order to recall the full version of Theorem~\ref{thm:salespitch},
we firstly need to state the following extension
of a theorem of Loxton \cite[Theorem 1]{ref:loxton}.
\begin{theorem}
	[Loxton theorem, {\cite[Theorem L]{ref:zannier}}]
	\label{thm:loxton}
	There exists a function $\LL \colon \RR_+ \to \RR_+$
	with the following property.
	For every number field $k$,
	we can fix a real number $B > 0$ and a finite subset $E \subseteq k$
	of cardinality at most $[k:\QQ]$ so that
	every algebraic integer $\alpha$ in $\kc$ can be written as
	\[ \sum_{i=1}^d e_i \xi_i \]
	where $e_i \in E$, $\xi_i \in \UU$,
	and $d \le \LL(B \cdot \house{\alpha})$.
\end{theorem}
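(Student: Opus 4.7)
The plan is to bootstrap from the classical Loxton theorem (the case $k=\QQ$), which asserts that algebraic integers in $\QQ^{\mathrm{cyc}}$ of house at most $A$ are sums of at most $L_0(A)$ roots of unity for some function $L_0 \colon \RR_+ \to \RR_+$. I would take this deep classical fact as a black box; the general-$k$ case is then a basis-theoretic reduction.

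First I would set $k_0 \defeq k \cap \QQ^{\mathrm{cyc}}$, so that $k$ and $\QQ^{\mathrm{cyc}}$ are linearly disjoint over $k_0$ and $\kc = k \cdot \QQ^{\mathrm{cyc}}$. Fix a $k_0$-basis $\beta_1, \dots, \beta_m$ of $k$ consisting of algebraic integers, where $m = [k:k_0] \le [k:\QQ]$. By linear disjointness these same $\beta_j$ also form a $\QQ^{\mathrm{cyc}}$-basis of $\kc$, so any $\alpha \in \mathcal{O}_{\kc}$ has a unique expansion $\alpha = \sum_{j=1}^m \beta_j \alpha_j$ with $\alpha_j \in \QQ^{\mathrm{cyc}}$. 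For each embedding $\rho \colon \QQ^{\mathrm{cyc}} \to \ol{\QQ}$, its $m$ extensions $\tau_1,\dots,\tau_m$ to $\kc$ give a linear system $\tau_i(\alpha) = \sum_j \tau_i(\beta_j)\, \rho(\alpha_j)$ whose matrix has squared determinant equal to (a Galois twist of) the relative discriminant of the $\beta_j$. Cramer's rule then bounds $|\rho(\alpha_j)|$ by $B_k \house{\alpha}$ for a constant $B_k$ depending only on $k$, and moreover shows that $d\alpha_j \in \ol{\ZZ} \cap \QQ^{\mathrm{cyc}}$ for a fixed positive integer $d$ clearing the relevant discriminants. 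After replacing the basis by $E \defeq \{\beta_j/d : 1 \le j \le m\} \subseteq k$ (still of size $\le [k:\QQ]$), each $d\alpha_j$ is a bona fide cyclotomic integer of house at most $dB_k \house{\alpha}$.

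Applying the classical Loxton theorem to each $d\alpha_j$ yields $d\alpha_j = \sum_i \xi_{j,i}$ with at most $L_0(dB_k\house{\alpha})$ roots of unity $\xi_{j,i}$, giving the desired expansion $\alpha = \sum_{j,i}(\beta_j/d)\xi_{j,i}$. Taking $B \defeq \max(dB_k,[k:\QQ])$ and $L(x) \defeq x L_0(x)$ (a single function independent of $k$, using that every nonzero algebraic integer has house $\ge 1$ by Kronecker) gives the uniform bound in the statement. The substantive obstacle is of course the classical Loxton theorem itself, a genuinely deep statement about $\QQ^{\mathrm{cyc}}$-integers; the reduction above is essentially routine, with the only subtlety being the careful treatment of the possibly nontrivial overlap $k \cap \QQ^{\mathrm{cyc}}$ (to avoid the naive temptation of taking $m = [k:\QQ]$ embeddings of $k$, which need not extend to automorphisms of $\kc$ fixing $\QQ^{\mathrm{cyc}}$).
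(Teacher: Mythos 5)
The paper does not actually prove this statement: Theorem~\ref{thm:loxton} is imported verbatim from Dvornicich--Zannier (their ``Theorem~L''), so there is no internal proof to compare against. Your proposal therefore supplies an argument where the paper supplies none, reducing the general-$k$ case to the classical Loxton theorem over $\Qc$.

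That reduction is correct, and the details check out. Working over $k_0 = k \cap \Qc$ is exactly the right move; since $\Qc/\QQ$ is Galois, $k$ and $\Qc$ are indeed linearly disjoint over $k_0$, so a $k_0$-integral basis $\beta_1,\dots,\beta_m$ of $k$ becomes a $\Qc$-basis of $\kc$, giving the unique decomposition $\alpha = \sum_j \beta_j \alpha_j$ with $\alpha_j \in \Qc$. For each embedding $\rho$ of $\Qc$, its $m$ extensions $\tau_i$ to $\kc$ restrict bijectively to the $m$ embeddings of $k$ over $\rho|_{k_0}$, so the matrix $(\tau_i(\beta_j))$ has $\det^2 = \rho(D)$ for the fixed nonzero discriminant $D \in \mathcal{O}_{k_0}$; this gives both a uniform lower bound on $|\det|$ (Cramer's rule then yields $|\rho(\alpha_j)| \le B_k \house{\alpha}$) and, using $\alpha_j \cdot \rho(D) = \det(M_j)\det(M) \in \ol{\ZZ}$ together with $d := |N_{k_0/\QQ}(D)|$ being divisible by $D$ in $\mathcal{O}_{k_0}$ (here you implicitly use that $k_0/\QQ$ is Galois, being a subfield of $\Qc$), integrality of $d\alpha_j$. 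Applying the $\QQ$-case Loxton theorem to each $d\alpha_j$ and bookkeeping with $E = \{\beta_j/d\}$, $B = \max(dB_k, [k:\QQ])$, $L(x) = xL_0(x)$ (valid since $\house{\alpha} \ge 1$ for nonzero algebraic integers and $L_0$ may be taken nondecreasing) gives the stated bound. The one thing worth making explicit is that the genuinely hard input is the $k=\QQ$ case (Loxton's original theorem), which you correctly treat as a black box --- the paper likewise cites it in Remark~\ref{rem:loxdetails}. Since this is presumably how Theorem~L is established in the cited reference, I'd call your route the standard one rather than a genuinely different one; it is simply not spelled out in this paper.
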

In light of this, it will be convenient to make the following definition.
\begin{definition}
	For every number field $k$
	we fix a pair $(B,E)$ (depending only on $k$) as above.
	We will call this the \vocab{Loxton pair} for $k$.
	The \vocab{Loxton function} $\LL$ will also remain fixed through the paper.
\end{definition}
\begin{remark}
	\label{rem:loxdetails}
	The exact nature of $\LL$ is not important for our purposes.
	However, it is possible to choose
	$\LL(x) = O_{\varepsilon}(x^{2+\varepsilon})$.
	Moreover, in the case $k = \QQ$ one can select $E = \{1\}$.
	See \cite{ref:loxton} for more details.
\end{remark}

\begin{definition}
	Let $h \in \kc(x)$ and fix $(B,E)$ a Loxton pair for $k$.
	Suppose that there exist a nonconstant $S \in \kc(x)$,
	integers $n_i$, roots of unity $\beta_i \in \UU$,
	and $e_i \in E$ which satisfy
	\[ \sum_{i=1}^d \beta_i e_i x^{n_i} = h(S(x)). \]
	In this case, we call the rational function
	$\sum \beta_i e_i x^{n_i}$ a \vocab{witness} for $h$.

	If $A \ge 1$ is a real number,
	the witness is called \vocab{$A$-short} if $d \le \LL(AB)$.
\end{definition}

Observe that, if there exists a witness for $h$,
then $h$ is seen to not be $P_A$-avoiding for sufficiently large $A$,
by simply selecting $x \in \UU$.
We will prove the following result.

\begin{theorem}
	\label{thm:big}
	Let $h(x) \in \kc(x)$ be nonconstant, and $A \ge 1$.
	Then $h$ is $P_A$-avoiding
	unless there exists an $A$-short witness for $h$.
\end{theorem}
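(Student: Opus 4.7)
The plan is to prove the contrapositive: assume $h$ is \emph{not} $P_A$-avoiding and construct an $A$-short witness. The hypothesis supplies an infinite set of $\alpha \in \kc$ with $h(\alpha) \in P_A$. For each such $\alpha$, Theorem~\ref{thm:loxton} gives a representation
\[ h(\alpha) = \sum_{i=1}^{d_\alpha} e_{\alpha, i}\, \xi_{\alpha, i}, \qquad e_{\alpha, i} \in E,\ \xi_{\alpha, i} \in \UU,\ d_\alpha \le L(AB). \]
Since $E$ is finite and $d_\alpha$ is uniformly bounded, pigeonhole lets me pass to an infinite subfamily on which $d_\alpha = d$ and the tuple $(e_{\alpha, 1}, \dots, e_{\alpha, d}) = (e_1, \dots, e_d) \in E^d$ are both fixed. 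I am then reduced to producing a witness with precisely these $d$ and $\vec e$.

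Next I write $h = p/q$ with $p, q \in \kc[x]$ coprime and consider
\[ F(x, \vec y) \defeq p(x) - (e_1 y_1 + \cdots + e_d y_d)\, q(x), \qquad V \defeq \{F = 0\} \subseteq \mathbb{A}^1 \times \BG^d. \]
Because the cover $h \colon \mathbb{P}^1 \to \mathbb{P}^1$ is connected, $p(T) - c\, q(T)$ is irreducible in $\kc(c)[T]$, and hence $F$ is irreducible in $x$ over $\kc(\vec y)$, so $V$ is geometrically irreducible of dimension $d$. My pigeonholed subfamily provides infinitely many $(\alpha, \vec\xi_\alpha) \in V(\kc)$ with $\vec\xi_\alpha \in \UU^d$.

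The heart of the argument is an unlikely-intersection step. Adapting the cyclotomic Hilbert-irreducibility results of \cite{ref:zannier} from a single parameter to the multivariable $F$, the locus of $\vec\xi \in \UU^d$ for which $F(x, \vec\xi) = 0$ has a root in $\kc$ cannot be Zariski-dense in $\BG^d$ (provided $\deg h \ge 2$; the case $\deg h = 1$ is handled separately by a direct M\"obius computation). By Laurent's theorem on torsion points in tori, the closure of this locus is a finite union of torsion translates of proper subtori of $\BG^d$, and to accommodate the infinite family at least one such translate $H$ must be positive-dimensional. Choosing a one-parameter component of $H$ gives a parametrization $(y_1, \dots, y_d) = (\beta_1 t^{n_1}, \dots, \beta_d t^{n_d})$ with $\beta_i \in \UU$ and $n_i \in \ZZ$; pulling back to $V$ gives a curve $C$ cut out by $h(x) = \sum_i e_i \beta_i t^{n_i}$ which still contains infinitely many $(\alpha, t) \in \kc \times \UU$. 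Applying the single-variable Dvornicich--Zannier corollary to $C$ produces a rational section $x = S(t)$ with $S \in \kc(t)$, and substituting yields the required $A$-short witness
\[ h(S(t)) = \sum_{i=1}^d \beta_i e_i \, t^{n_i}. \]

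I expect the multivariable cyclotomic Hilbert-irreducibility step to be the main obstacle: extending \cite{ref:zannier} from a single parameter to $d$ parameters while maintaining control over the torsion structure on $\UU^d$, and cleanly interleaving this with Laurent's theorem. The subsequent Laurent step is standard once Zariski non-density is established, and the final extraction of $S$ via the single-variable DZ corollary should be routine provided the curve $C$ is irreducible.
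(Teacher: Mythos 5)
Your overall architecture --- Loxton's theorem plus pigeonhole, a torus variety cut out by $h(x) = \sum e_i y_i$, and a torsion-point theorem --- closely parallels the paper's, and your observation that $V$ is irreducible (since $\kc(\vec y)/\kc(\sum e_i y_i)$ is purely transcendental) is a nice simplification of the paper's iterative pigeonhole step. But the middle of your argument has two problems, one cosmetic and one essential.

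The cosmetic one: your claim that ``the locus of $\vec\xi \in \UU^d$ for which $F(x,\vec\xi)=0$ has a root in $\kc$ cannot be Zariski-dense in $\BG^d$ (provided $\deg h \ge 2$)'' is false. Take $k=\QQ$, $E=\{1\}$, $h(x)=x^2$: every $\xi\in\UU$ has a square root in $\Qc$, so the locus is all of $\UU^d$. Zannier's theorem does not forbid density; it says that \emph{if} the torsion points are dense \emph{then} an isogeny factorization exists, and for $h(x)=x^2$ it does. Luckily the rest of your argument does not genuinely use the word ``proper,'' so this is just a mis-statement.

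The essential gap is reducing to a one-dimensional curve \emph{before} invoking Zannier's theorem. After Laurent's theorem you have a positive-dimensional torsion coset $H=\bm\beta T$, $\dim T = r\ge 1$, in which the relevant torsion points are Zariski-dense, and you ``choose a one-parameter component of $H$'' to get a curve $C$ ``which still contains infinitely many $(\alpha, t)\in \kc\times\UU$.'' When $r\ge 2$ this does not follow: a Zariski-dense set of torsion points in $\BG^r$ need not meet any one-dimensional torsion sub-coset infinitely often. For instance in $\BG^2$ the set $\{(\zeta_n,\zeta_{n^2}) : n\ge 1\}$ (with $\zeta_m$ a primitive $m$-th root of unity) is Zariski-dense, yet any one-dimensional torsion coset $\{(\beta_1 t^a,\beta_2 t^b)\}$ contains only finitely many of its points. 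So your $C$ may fail to carry the density you need, and the ``single-variable Dvornicich--Zannier'' step has nothing to bite on. (Separately, the result you cite there concerns $h(\alpha)\in\UU$, not $h(\alpha)=\sum e_i\beta_i t^{n_i}$; what you would actually need is Theorem~\ref{thm:zmain} with $r=1$.) The paper avoids this by performing the steps in the opposite order: it first applies Theorem~\ref{thm:zmain} on the full $r$-dimensional torus $T$, which produces an \emph{identity of rational functions} $h(R(\bm x))=\sum_i \beta_i e_i \bm x^{\bm v_i}$ in $r$ variables, and only afterwards specializes $\bm x=(x^{c_1},\dots,x^{c_r})$ to get a one-variable witness. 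Specializing an identity of rational functions is harmless; specializing a dense set of torsion points is not. Restructure the proof to apply the Zannier theorem on $\BG^r$ before descending to one variable.
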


According to Remark~\ref{rem:loxdetails} above,
the case $k = \QQ$ has a particularly nice phrasing.
\begin{corollary}
	Let $h(x) \in \Qc(x)$ be nonconstant and $A \ge 1$.
	Then $h$ is $P_A$-avoiding unless there exists $S \in \Qc(x)$
	such that $h(S(x))$ is equal to a Laurent polynomial
	$p \in \ZZ[\UU][x,x\inv]$
	with $|p(1)| \ll_{\varepsilon} A^{2+\varepsilon}$.
\end{corollary}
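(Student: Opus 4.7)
The plan is to derive the corollary as an immediate specialization of Theorem~\ref{thm:big} to $k = \QQ$, exploiting the two simplifications recorded in Remark~\ref{rem:loxdetails}: that the Loxton set $E$ can be chosen to be $\{1\}$ and that the Loxton function satisfies $L(x) = O_{\eps}(x^{2+\eps})$.

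First I would invoke Theorem~\ref{thm:big} with $k = \QQ$. If $h$ is $P_A$-avoiding we are done, so suppose instead that an $A$-short witness for $h$ exists: a nonconstant $S \in \Qc(x)$, roots of unity $\beta_i \in \UU$, elements $e_i \in E$, and integers $n_i$ with
\[ h(S(x)) = \sum_{i=1}^d \beta_i e_i x^{n_i}, \qquad d \le L(AB), \]
where $(B,E)$ denotes the Loxton pair for $\QQ$.

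Specializing $E = \{1\}$ allows me to drop each $e_i$, so $h(S(x)) = \sum_{i=1}^d \beta_i x^{n_i}$ is a Laurent polynomial with roots-of-unity coefficients, in particular lying in $\ZZ[\UU][x,x^{-1}]$; call it $p$. To bound $|p(1)|$, I would simply evaluate at $1$: $p(1) = \sum_i \beta_i$, whence the triangle inequality gives $|p(1)| \le d \le L(AB)$. Applying the estimate $L(x) \ll_{\eps} x^{2+\eps}$ from Remark~\ref{rem:loxdetails} and absorbing the constant $B$ (which depends only on $k = \QQ$) into the implied constant yields $|p(1)| \ll_{\eps} A^{2+\eps}$, as required.

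I do not anticipate any real obstacle here: the corollary is essentially a routine unwrapping of the definition of $A$-short witness, once Theorem~\ref{thm:big} and the two nice features of $\QQ$ in Remark~\ref{rem:loxdetails} are available. The only subtlety worth flagging is that the bound on $|p(1)|$ is extracted from the \emph{number} of monomials rather than the sizes of the coefficients (which are all of absolute value $1$), and it is exactly this count that $A$-shortness controls through the Loxton function.
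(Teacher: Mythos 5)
Your proof is correct and matches the paper's intended argument: the paper itself presents this corollary as a direct consequence of Theorem~\ref{thm:big} together with Remark~\ref{rem:loxdetails}, which is exactly the route you take. One point worth noting, which you handle correctly, is that the bound $|p(1)| \ll_{\eps} A^{2+\eps}$ must be extracted from Theorem~\ref{thm:big} (which exhibits the witness explicitly as $\sum_i \beta_i e_i x^{n_i}$ with $\beta_i \in \UU$ and $e_i \in E = \{1\}$, so every coefficient has absolute value $1$) rather than from the weaker formulation in Theorem~\ref{thm:salespitch}, which only records the term count and says nothing about coefficient sizes; the triangle-inequality step $|p(1)| \le d \le L(AB)$ then closes the argument.
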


As stated, these results do not give any bound
on the size of the degree of a witness.
However, the following theorem shows that
``most'' of $h(x) \in \kc(x)$ are in fact $P_A$-avoiding.
\begin{theorem}
	\label{thm:most}
	Let $k$ be a number field with Loxton pair $(B,E)$.
	Let $A \ge 1$ and let $h(x) \in \kc(x)$ be nonconstant.
	Suppose that
	\begin{itemize}
		\ii $\deg h > 2016 \cdot 5^{\LL(AB)+1}$, or
		\ii $h$ is a polynomial and $\deg h > (2\LL(AB)+1)^2$.
	\end{itemize}
	Then $h$ is $P_A$ avoiding
	unless it has an $A$-short witness $h(S(x))$ for which
	$\deg S \le 2$.
\end{theorem}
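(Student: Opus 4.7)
The plan is to use Theorem~\ref{thm:big} to reduce to studying $A$-short witnesses, and then apply decomposition theorems for lacunary (Laurent) polynomials in the style of Zannier to force the inner function $S$ into a very restricted form. If $h$ is not $P_A$-avoiding, Theorem~\ref{thm:big} provides an $A$-short witness
\[ W(x) = \sum_{i=1}^d \beta_i e_i x^{n_i} = h(S(x)), \qquad d \le L(AB), \]
for some $S \in \kc(x)$. The task is to show that under the stated hypotheses on $\deg h$, we may choose the witness so that $\deg S \le 2$.

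The key input is a Zannier-type theorem on lacunary decompositions. In the polynomial case (second bullet), the relevant statement is: if a polynomial $W$ with $d$ nonzero terms admits a factorization $W = h \circ S$ and $\deg h > (2d+1)^2$, then $S$ is forced to be ``essentially a monomial,'' i.e., $S(x) = \alpha x^s + \delta$ (after absorbing a translation into $h$). Setting $S'(y) \defeq \alpha y + \delta$ yields $\deg S' = 1$, and the identity $h(S(x)) = W(x)$ forces every exponent $n_i$ of $W$ to be divisible by $s$; writing $n_i = s m_i$, we see that $h(S'(y)) = \sum_{i=1}^d \beta_i e_i y^{m_i}$ has the shape of a Laurent polynomial witness with at most $d \le L(AB)$ terms, so it is again an $A$-short witness, now with $\deg S' \le 2$. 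The first bullet (general $h \in \kc(x)$) proceeds analogously using the Laurent analogue of Zannier's theorem, whose quantitative bound has exponential form $O(5^d)$; this explains the hypothesis $\deg h > 2016 \cdot 5^{L(AB)+1}$. In that setting ``essentially a monomial'' becomes: up to M\"obius conjugation by some element of $\PGL_2(\kc)$, $S$ is of the form $\alpha y^s + \beta y^{-s}$, which can be replaced by the degree-$2$ Laurent function $S'(y) = \alpha y + \beta y^{-1}$.

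The main obstacle will be invoking the precise Zannier-type lacunary decomposition theorem with the quantitative bounds $(2L(AB)+1)^2$ and $\ll 5^{L(AB)+1}$, and verifying that the replacement $S \rightsquigarrow S'$ preserves the shape $\sum \beta_i e_i y^{n_i}$ required of an $A$-short witness (with $\beta_i \in \UU$ and $e_i \in E$). In the Laurent case one must be especially careful that the M\"obius conjugation and the passage from $T(x)^s$ to the new variable $y$ interact well with the Loxton pair $(B,E)$, so that the new coefficients still factor as a root of unity times an element of $E$.
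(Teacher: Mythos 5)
Your proposal follows the paper's proof exactly: apply Theorem~\ref{thm:big} to extract an $A$-short witness $h \circ S$, invoke the Fuchs--Zannier composition theorem (Theorem~\ref{thm:terms}) to force $S$ into the exceptional shape once $\deg h$ exceeds the stated bound, and then replace $x^n$ by a fresh variable to produce a new witness with $\deg S \le 2$. The only imprecision is that the exceptional shape in the polynomial case is the symmetric Laurent trinomial $ax^n + b + cx^{-n}$ rather than $\alpha x^s + \delta$; this does not affect the argument, since replacing $x^n$ by $y$ still gives $\widetilde S(y) = ay + b + cy^{-1}$ of degree at most $2$.
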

\begin{remark}
	In fact, if $h \in \kc[x]$ is a polynomial which is not $P_A$-avoiding,
	one can find an $A$-short witness
	of the form $h(ax + b + cx\inv)$ for some $a,b,c \in \kc$
	(see Theorem~\ref{thm:terms}).
\end{remark}
\begin{remark}
	The constants involved in Theorem~\ref{thm:most}
	come from Fuchs-Zannier \cite{ref:terms},
	reproduced in the next section as Theorem~\ref{thm:terms}.
\end{remark}

\section{Background}
\label{sec:nukes}
To prove the main result, we will need other auxiliary results,
which we collect in this section.

\subsection{Tools from arithmetic geometry}
In what follows, fix $k$ a number field,
and $\BG = \operatorname{Spec} k[x, x\inv]$ as usual.
By a \vocab{torsion coset} of $\BG^d$,
we mean a translate $\beta \cdot T$
of a subtorus $T$ (i.e.\ a connected algebraic group)
by a torsion point $\beta$ of $\BG^d$.

\begin{theorem}
	[{\cite[Torsion Points Theorem]{ref:zannier}}]
	\label{thm:torsionpts}
	Let $V$ be an algebraic subvariety of $\BG^d$ defined over $\ol{\QQ}$.
	Then the Zariski closure of the set of torsion points in $V$
	is a finite union of torsion cosets of $\BG^d$.
\end{theorem}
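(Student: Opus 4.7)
The plan is to reduce the full statement to the following core claim by Noetherian induction on $\dim V$: if $V \subseteq \BG^d$ is irreducible and the torsion points of $\BG^d$ contained in $V$ are Zariski dense in $V$, then $V$ itself is a torsion coset $\beta \cdot T$. Given the core claim, each irreducible component of the Zariski closure of the torsion points contained in the original $V$ either satisfies this density hypothesis (so it is a torsion coset by the claim) or has strictly smaller dimension and is handled by induction. This yields the desired finite union.

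To prove the core claim, I would first translate by a single torsion point $\zeta_0 \in V$, reducing to the situation where $1 \in V$; then it suffices to show that $V$ is a subtorus. The strategy is to study the stabilizer subgroup $H = \{g \in \BG^d : gV = V\}$ and to show that its identity component $H^0$ has the same dimension as $V$. Combined with $1 \in V$ and irreducibility, this would force $V = H^0$, a subtorus. To establish the dimension bound, I would exploit the Galois action: if $\zeta \in V$ is torsion of large order $N$, then every Galois conjugate of $\zeta$ also lies in $V$ (since $V$ is defined over a number field $K$), and the size of this orbit is at least $\varphi(N)/[K:\QQ]$. As $N$ grows along a sequence of torsion points generic in $V$, these orbits should accumulate densely enough in $V$ to witness a positive-dimensional symmetry.

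The main obstacle is quantifying this ``accumulation''. The standard resolution is to invoke the multidimensional analogue of Bilu's equidistribution theorem: the Galois orbits of a generic sequence of torsion points of $\BG^d$ with orders tending to infinity weakly converge to Haar measure on the compact real subtorus $(S^1)^d \subset \BG^d(\CC)$. Applied inside $V$, this forces $V(\CC)$ to contain the support of the limiting measure; this support is itself a translate of a positive-dimensional real subtorus, and taking Zariski closure produces the required algebraic subtorus inside $\operatorname{Stab}(V)$. An alternative route, closer to Laurent's original proof, is to argue via lower bounds for linear forms in logarithms together with a pigeonhole on the Galois action; this avoids equidistribution but requires significantly more bookkeeping. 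Either way, the quantitative input on Galois orbits of torsion points (Bilu, or Baker) is the substantive content — the rest of the argument is formal.
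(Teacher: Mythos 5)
The paper does not prove this result: it is Laurent's theorem (the toric analogue of Manin--Mumford), imported from the literature via the citation \cite[Torsion Points Theorem]{ref:zannier}. There is therefore no in-paper argument to compare against, so I assess your sketch on its own terms.

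Your reduction is sound: it suffices to treat $V$ irreducible with Zariski-dense torsion, and after translating by a torsion point so that $1 \in V$, to show $V$ is a subtorus. (The Noetherian induction is unneeded---every irreducible component of the Zariski closure of a set of torsion points automatically has dense torsion, so each component directly satisfies the density hypothesis.) The gap is in how you apply Bilu's equidistribution theorem. Bilu, applied to a \emph{strict} generic sequence of torsion points, gives weak convergence to Haar measure on the full compact torus $(S^1)^d \subset \BG^d(\CC)$. Since a proper subvariety $V$ has Haar measure zero there, the conclusion one draws is the \emph{opposite} of what you wrote: a generic sequence of torsion points inside $V$ \emph{cannot} be strict. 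Hence infinitely many of them lie in some fixed proper torsion coset $\gamma H$, which (by density and genericity) forces $V \subseteq \gamma H$, and one finishes by induction on the ambient dimension $d$ after identifying $H$ with a lower-dimensional torus. Your formulation---that $V(\CC)$ contains the support of a limiting measure, that this support is a translate of a positive-dimensional real subtorus, and that its Zariski closure lands inside $\operatorname{Stab}(V)$---is not what Bilu delivers; and even if one did know $\gamma H \subseteq V$, there is no reason $H$ would stabilize $V$ unless $\dim H = \dim V$. The stabilizer framework you set up is the right scaffold for Laurent's Baker-theorem proof, or for the approach via Mann/Conway--Jones on vanishing sums of roots of unity, but splicing Bilu into it as you have leaves a genuine hole: equidistribution produces a smaller ambient torus to induct into, not a symmetry of $V$.
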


We also use a special case of \cite[Theorem 1]{ref:zannier}.
\begin{theorem}
	\label{thm:zmain}
	Let $k$ be a number field.
	Let $V/k$ be an affine variety irreducible over $\kc$
	and let \[ \pi : V \to \BG^r \] be a morphism of finite degree,
	defined over $k$.
	Assume the set of torsion points of $\pi(V(\kc))$
	is Zariski-dense in $\BG^r$.

	Then, there exists an isogeny $\mu : \BG^r \to \BG^r$
	and a birational map $\rho : \BG^r \dashrightarrow V$,
	both defined over $\kc$, such that the diagram
	\begin{center}
	\begin{tikzcd}
		\BG^r \rar[dashed]{\rho} \drar[swap]{\mu}
			& V \dar{\pi} \\
		& \BG^r
	\end{tikzcd}
	\end{center}
	commutes (over $\kc$).
\end{theorem}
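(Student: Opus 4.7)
The plan is to combine the structure theory of finite étale covers of tori with the Torsion Points Theorem to rule out nontrivial Galois twists, exploiting the fact that $\kc$ contains all roots of unity. Working throughout over $\kc$, I would first replace $V$ by its normalization and restrict to an open subset on which $\pi$ becomes finite étale; these changes do not affect the birational conclusion, so I may assume $\pi \colon V \to \BG^r$ is finite étale. Over the algebraic closure $\ol k$, the étale fundamental group of $\BG^r_{\ol k}$ is $\hat{\ZZ}^r$, so every connected finite étale cover of $\BG^r_{\ol k}$ is of the form $\BG^r \to \BG^r$ given by an isogeny. Hence $V_{\ol k}$ is a disjoint union of components each isomorphic to $\BG^r_{\ol k}$ and mapping to $\BG^r_{\ol k}$ via some isogeny; since $V$ is $\kc$-irreducible, these geometric components form a single $\mathrm{Gal}(\ol k / \kc)$-orbit. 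Picking a component $V'$ and an $\ol k$-isomorphism $\rho_0 \colon \BG^r_{\ol k} \to V'$ satisfying $\pi \circ \rho_0 = \mu_0$ for some isogeny $\mu_0$, the theorem reduces to descending the pair $(\rho_0, \mu_0)$ to $\kc$.

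The obstruction to descent lies in $H^1(\mathrm{Gal}(\ol k/\kc),\, \ker\mu_0)$. Since $\ker\mu_0 \subset \BG^r(\ol k)$ consists of torsion points and $\kc$ contains all roots of unity, $\mathrm{Gal}(\ol k/\kc)$ acts trivially on $\ker\mu_0$, so this cohomology group simplifies to $\mathrm{Hom}(\mathrm{Gal}(\ol k/\kc),\, \ker\mu_0)$. I now invoke the torsion-density hypothesis: for each torsion $\beta \in \UU^r \cap \pi(V(\kc))$, the fiber $\pi\inv(\beta)$ contains a $\kc$-rational point $v$, while the full $\ol k$-fiber $\rho_0(\mu_0\inv(\beta))$ consists entirely of torsion points that are automatically defined over $\kc$. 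Tracking which torsion preimage of $\beta$ yields $v$, and how this datum varies with $\beta$, encodes the local behavior of a Galois cocycle representing the descent obstruction.

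The final step is to argue that this cocycle is trivial. I would form a \emph{twist variety} $W \subset \BG^r$ whose $\kc$-points parametrize the failure of descent and observe that $W(\kc)$ contains a Zariski-dense set of torsion points inherited from the hypothesis; by Theorem~\ref{thm:torsionpts}, $W$ must then be a union of torsion cosets of $\BG^r$, and a direct check, possibly after pre-composing $\rho_0$ with translation by a torsion point, shows only the trivial coset is consistent with the cocycle data. This yields $\rho$ and $\mu$ defined over $\kc$ making the diagram commute. The main obstacle I foresee is precisely this last step: turning the pointwise Galois-cocycle information into a global structural statement, while navigating the ambiguity in the choice of $V'$ and $\rho_0$ (different choices differ by torsion translations of the source $\BG^r$). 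This is exactly where Theorem~\ref{thm:torsionpts} becomes indispensable, as the alternative proof via Kummer theory or Chebotarev density would not directly leverage the torsion geometry of $\BG^r$ in the way the present hypothesis provides.
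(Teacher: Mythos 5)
Your proposal does not match the paper's proof: the paper's proof of Theorem~\ref{thm:zmain} is a short reduction to \cite[Theorem 1]{ref:zannier} (Dvornicich--Zannier). One observes that the set $J = \{\eta \in V(\kc) : \pi(\eta) \text{ torsion}\}$ is Zariski-dense in $V$ because $\pi$ has finite degree, and then directly applies the cited theorem with the ambient torsion coset taken to be all of $\BG^r$. What you are attempting, by contrast, is essentially to reprove Dvornicich--Zannier's Theorem~1 from scratch, which is a substantially more ambitious task.

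There is a genuine gap in your reduction to the finite \'etale case. After normalizing $V$ and deleting the ramified locus, what you obtain is a finite \'etale map over an \emph{open subset} $U = \BG^r \setminus B$ of the torus, not over all of $\BG^r$: the branch locus $B$ in the base does not disappear when you shrink the source. The \'etale fundamental group of $U$ is much larger than $\widehat{\ZZ}^r$ (it has extra pro-cyclic factors coming from loops around the components of $B$), so the classification of connected covers as isogenies does not apply. Concretely, a finite map such as $x \mapsto x(x+1)^2/(x-1)^2$ from $\BG^1$ to $\BG^1$ is finite of degree $3$ and becomes \'etale after deleting the ramification, yet is certainly not birationally an isogeny of tori. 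The content of the theorem is precisely that the torsion-density hypothesis \emph{forces} the cover to be birationally unramified over the full torus; your argument invokes that hypothesis only at the later descent step, not where the structural claim actually needs it. This misallocation is fatal: the assertion ``$V_{\ol k}$ is a disjoint union of copies of $\BG^r$ mapping by isogenies'' is false for a general finite $\pi$, and deriving it is the real work hidden inside \cite[Theorem 1]{ref:zannier}.

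A secondary concern is that the final step---showing the descent cocycle in $\mathrm{Hom}(\mathrm{Gal}(\ol k/\kc), \ker\mu_0)$ vanishes---is left at the level of a sketch (``a direct check \ldots shows only the trivial coset is consistent''). Given that $\kc$ has nontrivial abelian extensions of every exponent, the vanishing of such a homomorphism is not automatic and would need a careful argument using the density of torsion in $\pi(V(\kc))$, not merely in $\pi(V(\ol k))$. But the earlier gap about unramifiedness is the more fundamental obstacle; I would recommend simply citing Dvornicich--Zannier's Theorem~1 as the paper does, or else carefully reconstructing their proof that torsion-density propagates unramifiedness.
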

\begin{proof}
	We define the set
	\[ J = \{ \eta \in V(\kc) : \pi(\eta) \text{ is a torsion point of } \BG^r \}. \]
	Thus $\pi(J)$ consists exactly of all torsion points of $\pi(V(\kc))$,
	so it is Zariski-dense by hypothesis.
	Since $\pi$ is of finite degree,
	it follows that $J$ is Zariski-dense in $V$ as well.
	Then we can apply \cite[Theorem 1]{ref:zannier},
	where the torsion coset $T$ in question is the entire $\BG^r$.
\end{proof}

\subsection{Results on compositions of rational functions}
We recall the following results of Fuchs and Zannier \cite{ref:terms}.
These results hold in much more generality if $\kc$
is replaced by any field of characteristic zero,
but we will not need that generality for our purposes.
\begin{theorem}
	[{\cite[Main Theorem and Theorem 2]{ref:terms}}]
	\label{thm:terms}
	Let $p, q, h \in \kc(x)$ be rational functions with $p = h \circ q$,
	Denote by $\ell$ the sum of the number of terms
	in the numerator and denominator of $p$.
	\begin{itemize}
		\ii Assume $q$ is not of the shape
		$\lambda(ax^n+bx^{-n})$ for $a,b \in \kc$,
		$\lambda \in \PGL_2(\kc)$, $n \in \ZZ_{>0}$.
		Then, \[ \deg h \le 2016 \cdot 5^\ell. \]
		\ii Suppose $p \in \kc[x,x\inv] \setminus \kc[x]$
		is a Laurent polynomial with $\ell$ nonconstant
		terms for some $\ell \ge 0$.
		Suppose moreover that $h \in \kc[x]$ is a polynomial
		and $q \in \kc[x, x\inv]$, where $q(x)$ is not of the shape
		$ax^n + b + cx^{-n}$ for $a,b,c \in \kc$, $n \in \ZZ_{>0}$.
		Then, \[ \deg h \le 2(2\ell-1)(\ell-1). \]
	\end{itemize}
\end{theorem}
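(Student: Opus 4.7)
The plan is to sketch the ideas, as this is a cited result of Fuchs and Zannier and the ``proof'' in this paper is really a citation. Without loss of generality we may work over an algebraically closed extension and write $h(y) = c \prod_{j=1}^{\deg h}(y - \gamma_j)$ with the $\gamma_j$ distinct (after adding a generic constant to $h$ and adjusting $q$ if necessary, which changes neither $\deg h$ nor the form of $q$). Then
\[ p(x) = c \prod_{j=1}^{\deg h}\bigl(q(x) - \gamma_j\bigr), \]
so $p$ is exhibited as a product of $\deg h$ pairwise coprime nonconstant Laurent polynomials (in the polynomial case) or rational functions (in the general case).

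The central step is a structure theorem for products of Laurent polynomials with few terms: if $\prod_i f_i$ of pairwise coprime nonconstant factors has at most $\ell$ terms, then either the number of factors is bounded by an explicit function of $\ell$, or all factors share a very rigid common shape, namely $f_i = \alpha_i x^n + \beta_i x^{-n}$ up to a common $\PGL_2$-coordinate change (in the rational setting) or $f_i = \alpha_i x^n + \delta_i + \beta_i x^{-n}$ for a common $n$ (in the polynomial setting). The proof of this dichotomy is the real content and is the main obstacle; it rests on $S$-unit equation techniques, in particular the Evertse--Schlickewei--Schmidt bound on nondegenerate solutions, applied to vanishing subsums encoding the combinatorics of the Newton polytope of the product.

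Granting this dichotomy, the second step is a short algebraic translation. The factors $q(x) - \gamma_j$ differ pairwise only by constants, so they all share the same top and bottom monomials of $q$. If the rigid alternative holds, then $q(x) - \gamma_j = \alpha_j x^n + \delta_j + \beta_j x^{-n}$ for each $j$, which forces $q$ itself to be of the form $ax^n + b + cx^{-n}$; the rational-function case similarly produces $q = \lambda(ax^n + bx^{-n})$ after applying a M\"obius transformation that sends one of the $\gamma_j$ to $\infty$. These are precisely the degenerate shapes excluded by hypothesis, so the first alternative must hold, yielding the desired bound on $\deg h$ in terms of $\ell$.

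To get the specific constants $2016 \cdot 5^\ell$ and $2(2\ell-1)(\ell-1)$, I would track the quantitative $S$-unit bound through the argument carefully; the exponential constant in the rational case reflects the combinatorial explosion in the $\PGL_2$ reduction, while the polynomial case admits the stronger quadratic bound because one only needs to analyze subsums in a two-sided Laurent expansion. The main obstacle is the structure theorem on products with few terms; once that is in hand, the reduction to the form of $q$ is essentially formal.
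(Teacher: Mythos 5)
You rightly flag that Theorem~\ref{thm:terms} is imported verbatim from Fuchs and Zannier \cite{ref:terms} and is not proved in this paper; there is no in-paper argument to compare against, and what you are sketching is the external source. The broad strategy you describe does match theirs: realize $p = h \circ q$ as a product of pairwise coprime shifts of $q$, invoke a structure theorem for products of few-term Laurent polynomials (resp.\ rational functions) to the effect that either the number of factors is bounded by a function of $\ell$ or all factors share a rigid Laurent-binomial shape, and observe that the rigid alternative forces $q$ into the excluded form. Two corrections of detail. First, the factorization $h(y) = c\prod_{j}(y-\gamma_j)$ is available only when $h$ is a polynomial, i.e.\ in the second bullet; in the first bullet $h$ is a genuine rational function, so one must factor its numerator and denominator separately (each of degree at most $\deg h$) and run the product analysis on both the zeros and the poles of $h$. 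Second, ``adding a generic constant to $h$'' already changes $p$ to $p+c$; there is no compensating ``adjusting $q$'', and the shift can increase $\ell$ (in the rational case potentially by more than one, since the numerator of $p+c$ mixes numerator and denominator terms of $p$). That slack is harmless for the shape of the theorem but should be stated if one is actually tracking the constants $2016\cdot 5^\ell$ and $2(2\ell-1)(\ell-1)$. With these caveats, your sketch is a fair high-level account, and you are right that the structure theorem on few-term products is where the real work sits; once granted, the translation to the excluded shapes of $q$ is routine.
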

\begin{corollary}
	[{\cite[Corollary on pg.\ 177]{ref:terms}}]
	\label{thm:specialterms}
	Let $q \in \kc(x)$ be non-constant,
	and $h \in \kc(x)$ with $\deg h \ge 3$ not special.
	Then for any integer $n \ge 3$,
	the sum of the number of terms in the numerator and denominator
	of the rational function $h^n \circ q$ is at least
	\[ \log_5 \left( \frac{(\deg h)^{n-2}}{2016} \right). \]
\end{corollary}

\subsection{Estimates on sizes of orbits}
We will use the following result,
which is based on \cite[\S1.3]{ref:ostafe}.
\begin{lemma}
	\label{lem:ostafe}
	Let $k$ be a number field and let $h = p/q \in k(x)$ be a rational function.
	Assume $\deg p > \deg q + 1$.

	Then, there exist a real number $T > 0$ and an integer $D$
	(depending only on $h$) with the following properties.
	For any algebraic number $\alpha$,
	\begin{itemize}
		\ii If $\house{h^n(\alpha)} \le A$ for some $n \ge 1$,
		then \[ \house{h^j(\alpha)} \le \max(T,A)
			\quad\text{ for } j=0,\dots,n-1. \]
		\ii If $h^n(\alpha)$ is an algebraic integer
		for some $n \ge 1$,
		then $Dh^j(\alpha)$ is an algebraic integer
		for $j=0,1,\dots,n-1$.
	\end{itemize}
\end{lemma}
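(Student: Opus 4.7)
The plan is to handle the two bullets by independent local analyses, archimedean and non-archimedean, each followed by an iteration argument. In both cases, write $h = p/q$ with $p, q \in \mathcal{O}_k[x]$ (clearing denominators if needed), and let $a, b$ denote the leading coefficients of $p, q$, with $d = \deg p$ and $e = \deg q$, so that $d - e \ge 2$.

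For the first bullet, for each of the $[k:\QQ]$ complex embeddings $\tau \colon k \hookrightarrow \CC$, the rational function $h^\tau \in \CC(x)$ obtained by $\tau$-conjugating coefficients satisfies $|h^\tau(z)| \sim |\tau(a)/\tau(b)| \cdot |z|^{d-e}$ as $|z| \to \infty$, so there is a threshold beyond which $|h^\tau(z)| \ge |z|$. I would take $T$ to be the maximum over $\tau$ of these thresholds. Then for any $\sigma \in \mathrm{Gal}(\ol\QQ / \QQ)$ with restriction $\tau = \sigma|_k$ and any $\beta \in \ol\QQ$, we have $\sigma(h(\beta)) = h^\tau(\sigma(\beta))$, so $\house{h(\beta)} \le A$ forces $|\sigma(\beta)| \le \max(T, A)$ for every $\sigma$. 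Inducting downward from $j = n$ gives $\house{h^j(\alpha)} \le \max(T, A)$ throughout, since the bound is stable under iteration.

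For the second bullet, the analogous picture holds at each non-archimedean place $v$ of $\ol k$: by the non-archimedean triangle inequality, when $v(\beta)$ is sufficiently negative the leading term dominates, so $v(h(\beta)) = v(a) - v(b) + (d-e)\, v(\beta)$. This yields the key estimate that if $v(h(\beta)) \ge -c$ for some $c \ge 0$, then $v(\beta) \ge -M_v(c)$, where
\[
  M_v(c) = \max\!\Big(v(a),\; v(b),\; (c + v(a))/(d-e)\Big).
\]
Starting from $c_n = 0$ and iterating $c_{n-j-1} = M_v(c_{n-j})$ produces a monotone sequence bounded above by the fixed point $c_v^* = \max(v(a), v(b))$. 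Hence $v(h^j(\alpha)) \ge -\max(v(a), v(b))$ for every $j$ and every $v$, and setting $D = ab$ (so that $v(D) \ge \max(v(a), v(b))$ at every finite place of $k$) makes $D\, h^j(\alpha)$ an algebraic integer.

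The main obstacle I anticipate is the non-archimedean iteration: a naive recursive argument, such as ``$Dh(\beta)$ integral $\Rightarrow Da\beta$ integral'', produces a constant that grows like $a^n$ with $n$. The fixed-point analysis above, which crucially uses $d - e \ge 2$ to keep the recurrence below its fixed point, is what yields a $D$ uniform in $n$; this is precisely why the hypothesis $\deg p > \deg q + 1$ (with strict inequality) appears in the statement.
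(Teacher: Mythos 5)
Your argument is correct in spirit and, at the conceptual level, runs parallel to the paper's: both rest on the fact that $\deg p - \deg q \ge 2$ forces a rational map to ``push mass toward $\infty$'' at every place, archimedean or not, so that preimages of small/integral points are themselves controlled. The main structural difference is that the paper does not carry out the local estimates itself. It first conjugates $h$ by a scalar $c$ (chosen so that $c^{d-e-1} = a/b$, which is exactly where $d-e-1\ge 1$ enters) to get $h(x) = c^{-1}\widetilde h(cx)$ with both numerator and denominator of $\widetilde h$ monic, and then invokes two estimates of Ostafe: one saying that if $\vv{c\alpha}$ exceeds $\max(1,\vv{a_i},\vv{b_i})$ at a finite place then the orbit $\vv{\widetilde h^j(c\alpha)}$ is strictly increasing, and an archimedean analogue. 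The ``monotone escape'' phrasing lets the paper bound $\vv{c\alpha}$ by $\max(\text{threshold},\vv{c\gamma})$ directly, with no iteration, and the normalization shrinks the parameter list in the estimates to just the lower coefficients. Your fixed-point iteration achieves the same uniformity in $n$, but you are in effect reproving Ostafe's Corollary 2.5; it is a valid, self-contained alternative.

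Two small inaccuracies worth fixing. First, in the non-archimedean step, solving $v(a)-v(b)+(d-e)v(\beta)\ge -c$ gives $-v(\beta)\le (c+v(a)-v(b))/(d-e)$, not $(c+v(a))/(d-e)$; your version merely weakens the bound, so the fixed-point conclusion $\max(v(a),v(b))$ survives, but the formula as written is off. Second, the statement asks for $D\in\ZZ$, while $D=ab$ only lies in $\mathcal{O}_k$; replace it by $N_{k/\QQ}(ab)$ or any nonzero integer multiple. You should also note explicitly that with $p,q\in\mathcal{O}_k[x]$ the lower coefficients have $v\ge 0$, so the threshold for ``leading term dominates'' is controlled by $v(a)$, $v(b)$ alone and does not silently bring in further constants. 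With these repairs the argument is complete and matches what the paper obtains via its citations.
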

\begin{proof}
	Suppose that $h^n(\alpha) = \gamma$.

	First, since $\deg p - \deg q \neq 1$ we can pick $0 \neq c \in \ol{\QQ}$
	(depending only on $h$) such that
	\[ h(x) = c \inv \cdot \wt h (cx) \]
	and moreover $\wt h$ is ``monic'' in the sense that $\wt h = \wt p / \wt q$ and
	\begin{align*}
		\wt p(x) &= x^d + a_{d-1} x^{d-1} + \dots + a_0 \\
		\wt q(x) &= x^e + b_{e-1} x^{e-1} + \dots + b_0.
	\end{align*}
	(It is possible that $c \notin k$;
	in this case we enlarge $k$ to contain $c$).
	Now, for any $j=0,\dots,n$ we have
	\[ h^j(x) = c\inv \cdot {\wt h}^j(cx). \]
	In particular, $\wt h^j(c\alpha) = c\gamma$.

	The first part now follows from applying \cite[Corollary 2.7]{ref:ostafe},
	to $cA$, $c\alpha$ and $\wt h$,
	using the condition $\deg p - \deg q > 1$.

	We proceed to the second part.
	Assume $\gamma$ is an algebraic integer.
	Note that by replacing the value of $n$,
	it suffices just to show that $D\alpha$ is an algebraic integer
	for some integer $D$ depending only on $h$.

	Let $\nu$ be an arbitrary finite place of $k$.
	Then \cite[Corollary 2.5]{ref:ostafe} implies that
	if $\vv{c\alpha} > \max\{1, \vv{a_i}, \vv{b_i}\}$
	then the sequence \[ \vv{\wt h^j(c\alpha)} \qquad \text{for } j=0,1,2,\dots \]
	is strictly increasing.
	Thus, in particular we must have
	\[ \vv{c\alpha} \le
		\max\left( 1, \vv{a_i}, \vv{b_i}, \vv{c\gamma} \right) \]
	or else we contradict the fact that $\wt h^j(c\alpha) = c\gamma$.

	Now, let $D$ be an integer for which
	$Dc\inv $, $Dc\inv a_i$, $Dc\inv b_i$ are all algebraic integers.
	Multiplying the previous inequality by $Dc\inv$, we obtain
	\begin{align*}
		\vv{D\alpha} &\le
		\max\left( \vv{Dc\inv}, \vv{Dc\inv a_i}, \vv{Dc\inv b_i},
		\vv{D \gamma} \right) \\
		&\le 1.
	\end{align*}
	Since this is true for every finite place $\nu$,
	it follows that $D\alpha$ is an integer.
	Moreover, since $D$ depends only on $c$, $a_i$, $b_i$
	and not on $\gamma$, it follows that $D$ depends only on $h$,
	which proves our assertion.
\end{proof}

\section{Proof of results on $P_A$-avoidance}
\label{sec:usenuke}

\begin{proof}[Proof of Theorem~\ref{thm:big}]
	Assume $h$ is not $P_A$-avoiding, so $h(\kc)$ contains infinitely elements of $P_A$.
	By Theorem~\ref{thm:loxton} and the pigeonhole principle,
	we can fix $d \le \LL(AB)$ and $e_i \in E$ such that
	there exist infinitely many elements $y \in \kc$
	and $\xi_1, \dots, \xi_d \in \UU$ satisfying
	\[ h(y) = \sum_{i=1}^d e_i \xi_i. \]

	\bigskip

	Take $\BG^{d+1}$ equipped with coordinates $(x_1, \dots, x_d, y)$.
	Letting $h = p/q$ for $p, q \in \kc[x]$,
	consider the subvariety
	\[ V \subseteq \BG^{d+1} \]
	defined by the equation
	\[ p(y) = q(y) \sum_{i=1}^d e_i x_i. \]
	Moreover, let $\UU_d$ denote the set of torison points of $\BG^d$
	and let $\Pi : V \to \BG^d$ be the projection onto the first $d$ coordinates.
	We now consider the following iterative procedure.
	Initially, let 
	\[ W_0 = V, \quad \bm \beta_0 = \bm 1 \in \BG^d,
		\quad \text{and } T_0 = \BG^d \]
	so the torsion coset $\bm \beta_0 T_0$ is all of $\BG^d$.
	So we have $\Pi(W_0) \subseteq \bm \beta_0 T_0$ and
	$\#(\Pi(W_0) \cap \UU_d) = \infty$.
	Then we recursively perform the following procedure for $i=0,1,2,\dots$.
	\begin{itemize}
		\ii Consider the infinite set
		$\bm \beta_i\inv \Pi(W_i) \cap \UU_d \subseteq T_i$.
		By Theorem~\ref{thm:torsionpts} applied to the subvariety $T_i$,
		its Zariski closure consists of finitely many torsion cosets.
		Hence by pigeonhole principle, we may pick a particular torsion coset,
		say $\bm\beta' T_{i+1}$, containing infinitely many elements of $\UU_d$.
		Now set $\bm\beta_{i+1} = \bm\beta_i \bm\beta'$.
		Then we conclude that $\bm\beta_{i+1} T_{i+1}$
		is the closure of some infinite subset of $\Pi(W_i) \cap \UU_d$.

		\ii Now consider the preimage $\Pi\inv(\bm \beta_{i+1} T_{i+1})$,
		which is a closed subvariety of $W_i$.
		Then by pigeonhole principle,
		we can set $W_{i+1}$ to be any irreducible component of $W_i$
		such that $\# (\Pi(W_{i+1}) \cap \UU_d) = \infty$.
		Of course by construction $\Pi(W_{i+1}) \subseteq \bm\beta_{i+1} T_{i+1}$.
	\end{itemize}
	From this we have constructed
	\[ V = W_0 \supseteq W_1 \supseteq \cdots \]
	a decreasing sequence of subvarieties of $V$,
	with $W_i$ irreducible for $i \ge 1$.
	For dimension reasons, this sequence must eventually stabilize.
	Thus the torsion coset $\bm\beta_i T_i$ stabilizes too.
	So we conclude there exists
	\begin{itemize}
		\ii an \emph{irreducible} affine subvariety $W \subseteq V$,
		\ii a particular torsion coset $\bm\beta T \subseteq \BG^d$,
		where $\bm\beta = (\beta_1, \dots, \beta_d) \in \UU^d$
		and $T$ is a torus, and
		\ii $Z \defeq \Pi(W) \cap \UU_d$ a set of torsion points of $\BG^d$
	\end{itemize}
	such that
	\[ \Pi(W) \subseteq \bm\beta T,
		\qquad \ol Z = \bm\beta T,
		\qquad\text{and}\quad
		\#Z = \infty.  \]
	(In the case $V$ is already an irreducible subvariety,
	then $W=V$, the torsion coset $\bm\beta T$ is exactly $\BG^d$,
	and $Z=\UU_d$.
	On the other hand if $V$ is not irreducible then
	the $W_i$ start to decrease after the first step.)

	Let $r \defeq \dim T$; note that $r \ge 1$
	since $T$ contains the infinite set $Z$.

	\bigskip

	We now wish to apply Theorem~\ref{thm:zmain}.
	Consider the composed map $\pi : W \to \BG^r$
	defined by taking $\varphi$ as below:
	\begin{center}
	\begin{tikzcd}
		W \rar{\varphi} & T \rar[two heads]{\psi}[swap]{\simeq} & \BG^r \\
		(x_1, \dots, x_d, y) \rar[mapsto]
			& (\beta_1\inv x_1, \dots, \beta_d\inv x_d). &
	\end{tikzcd}
	\end{center}
	From the fact that $\ol Z = \bm\beta \cdot T$,
	we conclude that the set of torsion points in $\pi(W)$
	is Zariski dense in $\BG^r$.
	Applying Theorem~\ref{thm:zmain},
	there exist an isogeny $\mu : \BG^r \to \BG^r$
	and a birational map $\rho : \BG^r \dashrightarrow W$
	such that the diagram
	\begin{center}
	\begin{tikzcd}
		\BG^r \rar[dashed]{\rho} \drar[swap]{\mu}
			& W \dar{\pi} \rar{\varphi} & T \\
		& \BG^r \urar[swap]{\psi\inv} &
	\end{tikzcd}
	\end{center}
	commutes.

	Assume
	\begin{align*}
		\rho(\bm x) &= (R_1(\bm x), \dots, R_d(\bm x), R(\bm x)) \\
		\intertext{for rational functions $R_1, \dots, R_d, R$ (here $\bm x \in \BG^r$);
		then} \varphi(\rho(\bm x)) &=
		\left( \beta_1\inv R_1(\bm x), \dots, \beta_d\inv R_d(\bm x), R(\bm x) \right).
	\end{align*}
	Now, the right-hand side of $\varphi \circ \rho = \psi\inv \circ \mu$
	is the composition of an isogeny and an isomorphism,
	thus (for instance by \cite[Proposition 3.2.17]{ref:tori}),
	we recover that $R_i(\bm x) = \beta_i \bm x^{\bm v_i}$
	for some vectors $\bm v_i \in \ZZ^r$
	which are linearly independent (and in particular nonzero).

	Thus
	\[ \rho(\bm x) = (\beta_1 \bm x^{\bm v_1}, \dots, \beta_d \bm x^{\bm v_d},
		R(\bm x)) \]
	and we obtain an identity
	\[ h(R(\bm x)) = \sum_{i=1}^d e_i \cdot \beta_i \bm{x}^{\bm v_i}.  \]
	Since the $\bm{v_i}$ are independent,
	it follows that one can specialize $\bm x$ to a
	choice of the form $\bm x = (x^{c_1}, \dots, x^{c_r})$
	for some integers $c_i \in \ZZ$
	so that the terms $\bm{x}^{\bm{v_i}}$ are pairwise distinct.
	Thus we finally obtain
	\[ h(S(x)) = \sum_{i=1}^d \beta_i e_i x^{n_i} \]
	where $S$ is a rational function
	(defined by $S(x) \defeq R(x^{n_r}, \dots, x^{c_r})$),
	and the right-hand side is nonconstant in $x$.
	This is the desired $A$-short witness.
\end{proof}

\begin{proof}
	[Proof of Theorem~\ref{thm:most}]
	First suppose $h(x) \in \kc(x)$.
	Then by Theorem~\ref{thm:big}, $h$ is $P_A$-avoiding
	unless we have an identity
	\[ h(S(x)) = \sum_{i=1}^d \beta_i e_i x^{n_i} \]
	where the right-hand side
	has at most $d \le \LL(A \cdot B)$ terms.

	First assume $S = \mu(ax^n+bx^{-n})$ for some $\mu \in \PGL_2(k)$.
	Set now $\wt S = \mu(ax+bx\inv)$, $\deg \wt S = 2$.
	We now see that
	\[ h(\wt S(x)) \]
	is an $A$-short witness, establishing the theorem.

	Otherwise Theorem~\ref{thm:terms} applies with $\ell = d+1$,
	and we deduce that \[ \deg h \le 2016 \cdot 5^{d+1} \]
	which contradicts the first hypothesis of Theorem~\ref{thm:most}.
	This implies one direction.

	In the case $h \in \kc[x]$,
	we repeat the same argument,
	applying the second part of Theorem~\ref{thm:terms}.
	(That $S$ is a Laurent polynomial follows
	from the fact that it cannot have any nonzero poles,
	in light of the right-hand side having the same property.)
\end{proof}

\begin{proof}
	[Proof of Corollary~\ref{cor:amusing}]
	Suppose by contradiction $h$ is not $P_A$-avoiding;
	then by Theorem~\ref{thm:big} there is an $A$-short witness
	and we may write
	\[ h(S(x)) = \sum_i \beta_i e_i x^{n_i}. \]
	View this as an identity of rational functions in $\CC(x)$.

	On the one hand, since $S \in \CC(x)$ is a
	nonconstant rational function,
	its range in $\CC$ omits at most one point of $\CC$.
	Since $h$ has at least three poles,
	it follows that there is an $x_0 \neq 0$ such that $S(x_0)$
	is a pole of $h$.

	On the other hand, the only possible pole of the right-hand side
	is $x = 0$, which is the desired contradiction.
\end{proof}

\section{Proof of results on strong $P_A$-avoidance}
\label{sec:orbits}

\begin{proof}
	[Proof of Theorem~\ref{thm:strongavoid}]
	Since $h$ is given to be $P_A$-avoiding,
	it suffices to show that for a given $\gamma \in P_A$,
	there are only finitely many $\alpha \in \kc$
	such that $h^n(\alpha) = \gamma$ for some $n \ge 1$.

	Assume by contradiction there are infinitely many
	pairs $(\alpha, n)$ such that $h^n(\alpha) = \gamma$.
	Select $T > 0$ and $D \in \ZZ$ by Lemma~\ref{lem:ostafe},
	and let \[ C \defeq D\max(T, A). \]
	We make the following claim.
	\begin{claim*}
		For any integer $N$,
		$D \cdot h^N(x)$ is not weakly $P_C$-avoiding.
	\end{claim*}
	To see this, discard the finitely many pairs with $n \le N$,
	and consider only those with $n > N$.
	Then by applying Lemma~\ref{lem:ostafe}
	to such pairs $(\alpha, n)$ with $n > N$,
	there are infinitely many $\alpha$
	such that $D \cdot h^N(\alpha)$ is an algebraic integer;
	moreover, the house of $D \cdot h^N(\alpha)$
	is at most $D \cdot \max(T,A) = C$, giving the claim.

	Consequently, by Theorem~\ref{thm:big}
	for every integer $N$ there exists
	a $C$-short witness.
	In other words, for all $N \ge 1$ there
	exists $S \in \kc(x)$ such that
	\[ D \cdot h^N(S(x)) = \sum_{i=1}^d \beta_i e_i x^{n_i} \]
	where $d \le \LL(BC) = \LL(BD\max(T,A))$.

	By hypothesis, $\deg h \ge 2$. Assume that $\deg h \ge 3$.
	Since we are given that $h$ is special,
	by Corollary~\ref{thm:specialterms},
	$h^N$ has at least $\log \left( \frac{(\deg h)^{N-2}}{2016} \right)$
	terms, which gives a contradiction if we take
	\[ N > 2 + \log_{\deg h} \left( 2016
		\cdot 5^{\LL(BD\max(T,A))} \right). \]
	For $\deg h = 2$ one can apply the same argument
	replacing $h$ with $h \circ h$.
\end{proof}

\bibliographystyle{hplain}
\bibliography{refs-cyclotomic}

\begin{thebibliography}{1}

\bibitem{ref:tori}
Enrico Bombieri and Walter Gubler.
\newblock {\em Heights in {D}iophantine geometry}, volume~4 of {\em New
  Mathematical Monographs}.
\newblock Cambridge University Press, Cambridge, 2006.

\bibitem{ref:zannier}
R.~Dvornicich and U.~Zannier.
\newblock Cyclotomic {D}iophantine problems ({H}ilbert irreducibility and
  invariant sets for polynomial maps).
\newblock {\em Duke Math. J.}, 139(3):527--554, 2007.

\bibitem{ref:terms}
Clemens Fuchs and Umberto Zannier.
\newblock Composite rational functions expressible with few terms.
\newblock {\em J. Eur. Math. Soc. (JEMS)}, 14(1):175--208, 2012.

\bibitem{ref:kro2}
Gebhard Greiter.
\newblock A simple proof for a theorem of {K}ronecker.
\newblock {\em Amer. Math. Monthly}, 85(9):756--757, 1978.

\bibitem{ref:kro1}
L.~Kronecker.
\newblock Zwei {S}\"atze \"uber {G}leichungen mit ganzzahligen {C}oefficienten.
\newblock {\em J. Reine Angew. Math.}, 53:173--175, 1857.

\bibitem{ref:loxton}
J.~H. Loxton.
\newblock On the maximum modulus of cyclotomic integers.
\newblock {\em Acta Arith.}, 22:69--85, 1972.

\bibitem{ref:ostafe}
Alina Ostafe.
\newblock On roots of unity in orbits of rational functions.
\newblock {\em Proc. Amer. Math. Soc.}, 145(5):1927--1936, 2017.

\end{thebibliography}

\end{document}